\theoremstyle{plain}
\newtheorem{theorem}{Theorem}[section]
\newtheorem{lemma}[theorem]{Lemma}
\newtheorem{corollary}[theorem]{Corollary}
\newtheorem{proposition}[theorem]{Proposition}
\theoremstyle{definition}
\newtheorem{definition}[theorem]{Definition}
\newtheorem{remark}[theorem]{Remark}
\newtheorem{myexample}[theorem]{Example}
\title{Eulerian-minors and a concise recursive characterization of 4-regular planar graphs}
\author{
 Metrose Metsidik and Qi Yan\footnote{Corresponding author. Email: qiyanmath@163.com}\\
{\small College of Mathematical Sciences, Xinjiang Normal University, $830017$, PR China}\\
{\small School of Mathematics and Statistics, Lanzhou University, Lanzhou, Gansu $730000$, PR China}}
\date{}
\begin{document}
%\begin{frontmatter}

\maketitle
\baselineskip=0.30in

\begin{abstract}
An Eulerian-minor of an Eulerian graph is obtained from an Eulerian subgraph of the Eulerian graph by contraction. The Eulerian-minor operation preserves Eulerian properties of graphs and moreover Eulerian graphs are well-quasi-ordered under Eulerian-minor relation. In this paper, we characterize Eulerian, planar and outer-planar Eulerian graphs by means of excluded Eulerian-minors, and provide a concise recursive characterization to 4-regular planar graphs.
\vskip 0.2cm

\noindent Keywords: {\it Minor; Eulerian; Eulerian-minor; Planar graphs; Outer-planar graphs; 4-regular planar graphs}
%\noindent{\it Mathematics Subject Classification (2000): 05C10, 05C45}
\end{abstract}

\section{Introduction}
\noindent

A {\it minor} of a graph is obtained from a subgraph of the graph by contraction. Robertson and Seymour~\cite{R-S} showed that graphs are well-quasi-ordered under the graph minor relation. This is a profoundest result in graph theory and be called graph minor theorem. By graph minor theorem, we know that every minor-closed families of graphs can be characterised via excluded minors, but very few explicit characterisations are known. Perhaps the best-known is Wagner theorem~\cite{Wagner}: a graph $G$ is planar if and only if it contains neither $K_5$ nor $K_{3,3}$ as a minor.

It is easy to check that a minor of an Eulerian or a bipartite graph is not always Eulerian or bipartite. Chudnovsky et al.~\cite{Chudnovsky} introduced a bipartite-minor  operation to bipartite graphs and proved a bipartite analog of Wagner theorem: a bipartite graph is planar if and only if it does not contain $K_{3,3}$ as a bipartite-minor, meanwhile they left a bipartite analog of graph minor theorem as an open problem: is the bipartite-minor relation a well-quasi-ordering on the set of bipartite graphs? Wagner~\cite{Wagner1} defined bipartite and Eulerian minors of binary matroids and extended Chudnovsky et al's bipartite planar graph characterization to binary matroids.

Moffatt~\cite{Moffatt6} presented ribbon graph (cellularly embedded graph) minors. Ribbon graph minors differ from abstract graph minors only in contracting loops. Contracting loops are necessary in ribbon graph minors and taking this may produce extra vertices and components. This derives a conclusion that the underlying graphs of ribbon graph minors need not be abstract graph minors. Moffatt conjectured that ribbon graphs are well-quasi-ordered under the ribbon graph minor relation, then he characterized the ribbon graphs representing link diagrams~\cite{Moffatt6} and the ribbon graphs admitting partial duals of Euler genus at most one~\cite{Moffatt7}. Metsidik and Jin~\cite{MM}  defined Eulerian and even-face ribbon graph minors and described Eulerian, even-face, plane Eulerian and plane even-face ribbon graphs in terms of excluded Eulerian/even-face ribbon graph minors.

In 2018, Wagner introduce that an Eulerian minor is gained by either edge contractions, admissible splits, or isolated vertex deletions. One can also define Eulerian graph minors in exactly the same of graph minors. An {\it Eulerian-minor} of an Eulerian graph is obtained from an Eulerian subgraph of the Eulerian graph by a sequence of edge contractions. This Eulerian-minor is different from Wagner's Eulerian minor, for example, Eulerian subgraphs of an Eulerian graph are obviously Eulerian-minors of the Eulerian graph, but this doesn't hold true for Wagner's Eulerian minor. In the following two sections of this paper, we show the following results:
\begin{enumerate}
  \item A graph is Eulerian if and only if it contains no Eulerian-minor equivalent to $K_2$;
  \item Eulerian graphs are well-quasi-ordered under the Eulerian-minor relation;
  \item An Eulerian graph is planar if and only if it contains no Eulerian-minor* equivalent to $K_5$ or $K_{3,3}^\prime$;
  \item An Eulerian graph is outerplanar if and only if it contains no Eulerian-minor* equivalent to $K_{2,3}^\prime$ or $K_{4}^\prime$.
%  \item A recursive characterization of 4-regular planar graphs.
\end{enumerate}

4-regular planar graphs are interesting not only because of their close connection to knot theory but also their intrinsic attributes. Manca proposed four planar operations to generate all 4-regular planar graphs from the octahedron graph~\cite{Manca}. Lehel corrected Manca's result by adding a fifth planar operation~\cite{Lehel}. Then Broersma et al. showed all 3-connected 4-regular planar graphs can be generated from the octahedron graph by using the three planar operations among Manca's four planar operations~\cite{Broersma}. In the last section of this paper, we generate all 4-regular planar graphs from the 4-regular graphs on one vertex by only one planar operation.

\section{Eulerian-minor}

All graphs considered in this paper are finite. An Eulerian graph is connected in general, we somewhat abuse the concept in this note. Here by an {\it Eulerian} graph we mean a graph with no odd degree vertex. Note that an empty graph (edgeless graph) is also Eulerian. A {\it cycle decomposition} of a graph $G$ is a partition of the edge-set $E(G)$ of $G$ such that each partition set forms a cycle. The following theorem is a classical characterization of Eulerian graphs.

\begin{theorem}[\cite{O.Ve}]\label{TH-1}
A graph $G$ is Eulerain if and only if $G$ has a cycle decomposition.
\end{theorem}
\begin{definition}
Let $C$ be a cycle of a graph $G$. Then the operation $G-E(C)$ is called {\it cycle-deletion}, where $E(C)$ is the edge set of $C$.
\end{definition}

Using the symbol $G/xy$ we denote the graph obtained from a graph $G$ by {\it contracting} an edge $xy$ of $G$ into a new vertex $z$ such that $z$ is adjacent to all former neighbors of $x$ and $y$.
\begin{definition}\label{MD-00}
A graph $H$ is an {\it Eulerian-minor} of a graph $G$ if there exists a sequence of graphs taking $G$ to $H$ such that each graph in the sequence is obtained from its predecessor by either an edge contraction, a cycle-deletion, or the
deletion of an isolated vertex.
\end{definition}

Clearly, any subgraph of a graph is also a minor of the graph. Since an arbitrary deletion is not allowed in Eulerian-minor, a subgraph of a graph may not be an Eulerian-minor of the graph. Anyway, we have the following proposition.

\begin{proposition}\label{MP-33}
An Eulerian subgraph of an Eulerian graph is an Eulerian-minor of the Eulerian graph.
\end{proposition}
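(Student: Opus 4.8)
The plan is to realize $H$ from $G$ without ever using a contraction, relying only on cycle-deletions to strip away the edges of $G$ that do not belong to $H$ and on isolated-vertex deletions to discard the vertices lying outside $V(H)$. The central object is the spanning subgraph $G'$ of $G$ with vertex set $V(G)$ and edge set $E(G)\setminus E(H)$. The whole difficulty is that an Eulerian-minor forbids arbitrary edge deletions, so I cannot simply erase the non-$H$ edges; I must organize them into cycles.

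First I would check that $G'$ is Eulerian. For each vertex $v$ the degree of $v$ in $G'$ equals $\deg_G(v)-\deg_H(v)$, since the edges of $G$ incident to $v$ that are removed are exactly those of $H$ incident to $v$. As both $G$ and $H$ have all even degrees, this difference is even, so $G'$ has no odd-degree vertex and is Eulerian. By Theorem~\ref{TH-1}, $G'$ therefore admits a cycle decomposition, i.e. a partition $E(G')=E(C_1)\cup\cdots\cup E(C_k)$ into pairwise edge-disjoint cycles. I would then delete these cycles one at a time, forming $G-E(C_1)$, then $(G-E(C_1))-E(C_2)$, and so on. Because the $C_i$ are pairwise edge-disjoint, each $C_i$ survives intact in the graph produced by the earlier deletions, so every one of these steps is a legitimate cycle-deletion.

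After all $k$ cycle-deletions the remaining graph has vertex set $V(G)$ and edge set exactly $E(H)$; that is, it is $H$ together with the vertices of $V(G)\setminus V(H)$. Each such extra vertex $v\notin V(H)$ has all of its incident $G$-edges in $E(G)\setminus E(H)$, hence is now isolated, so I would remove them by isolated-vertex deletions, obtaining precisely $H$. This exhibits a sequence of allowed operations taking $G$ to $H$, so $H$ is an Eulerian-minor of $G$. The one point demanding care is the sequential validity of the cycle-deletions — that each $C_i$ is still a cycle when its turn comes — which is exactly what the edge-disjointness of the decomposition guarantees; the degree-parity computation and the identification of the leftover isolated vertices are then routine.
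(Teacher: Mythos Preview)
Your proof is correct and follows essentially the same approach as the paper: form the Eulerian graph $G-E(H)$, apply Theorem~\ref{TH-1} to decompose its edge set into edge-disjoint cycles, perform the corresponding cycle-deletions, and then remove the isolated vertices outside $V(H)$. You have added some helpful detail (the degree-parity check and the verification that the cycle-deletions remain valid in sequence), but the argument is the same.
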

\begin{proof}
Let $H$ be an Eulerian subgraph of an Eulerian graph $G$. Then the subgraph $G-E(H)$ is again Eulerian. By Theorem~\ref{TH-1}, $G-E(H)$ has a cycle decomposition. Set $E(G)-E(H)=\bigcup_{i=1}^sE(C_i)$, where $C_i$ is a cycle and $E(C_i)\cap E(C_j)=\emptyset$ for $i\neq j\in\{1,\cdots,s\}$. By cycle-deletions of $C_1,\cdots,C_s$ and deletions of isolated vertices not belonging to $H$, we obtain $H$ as an Eulerian-minor of $G$.
\end{proof}

\begin{remark}
By the proof of Proposition~\ref{MP-33}, we have that an Eulerian subgraph of an Eulerian graph is gained by a sequence of cycle-deletions and isolated vertex deletions. Therefore, the two definitions of Eulerian-minor described in Introduction and Definition~\ref{MD-00} are equivalent.
\end{remark}

A {\it quasi-ordering} is a reflexive, anti-symmetric and transitive relation. An {\it antichain} is a subset with no comparable elements. A quasi-ordering on a set is a {\it well-quasi-ordering} if it contains neither an infinite antichain nor an infinite decreasing sequence. The following two propositions are clear from the definition of Eulerian-minor.

\begin{proposition}\label{MP-1}
Every Eulerian-minor of an Eulerian graph is also Eulerian.
\end{proposition}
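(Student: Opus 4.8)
The plan is to induct on the length of a sequence of operations that realizes $H$ as an Eulerian-minor of $G$, in the sense of Definition~\ref{MD-00}. Since being Eulerian means precisely that every vertex has even degree, it suffices to show that each of the three elementary operations allowed in that definition --- an edge contraction, a cycle-deletion, or the deletion of an isolated vertex --- turns a graph with all degrees even into another such graph. The base case of an empty sequence is trivial, for then $H=G$ is Eulerian by hypothesis, and the inductive step reduces to checking a single operation applied to an already-Eulerian graph, the full sequence following by transitivity.

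First I would dispatch the two deletion operations, which are routine parity checks. Deleting an isolated vertex removes a vertex of degree $0$ and changes no other degree, so evenness is preserved everywhere. For a cycle-deletion $G'-E(C)$, each vertex lying on $C$ is incident with exactly two edges of $C$ and hence loses exactly $2$ from its degree, while every vertex off $C$ is untouched; all degrees thus remain even. These two cases use only the structure of a cycle and require no further input.

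The substantive step, and the one I expect to be the main obstacle, is edge contraction. Forming $G'/xy$ from an Eulerian graph $G'$ creates a new vertex $z$ whose incident edges are exactly those formerly incident with $x$ or with $y$, apart from the contracted edge $xy$, which contributed one endpoint at each of $x$ and $y$. Counting endpoints at $z$ gives $\deg(z)=(\deg(x)-1)+(\deg(y)-1)=\deg(x)+\deg(y)-2$, which is even because $\deg(x)$ and $\deg(y)$ are both even; and no other vertex changes degree, so $G'/xy$ is again Eulerian. The only genuine care needed is the bookkeeping at $z$: any parallel edges joining $x$ and $y$ become loops at $z$, and one must count their endpoints consistently so that the identity $\deg(z)=\deg(x)+\deg(y)-2$ truly holds. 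With the three cases established, each single operation preserves evenness of all degrees, the induction closes, and $H$ is Eulerian.
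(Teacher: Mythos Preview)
Your proof is correct: checking that each of the three elementary operations preserves even degrees, and then inducting on the length of the defining sequence, is exactly the verification the paper has in mind when it declares the proposition ``clear from the definition of Eulerian-minor'' without giving an explicit argument. Your handling of contraction, including the bookkeeping for parallel $xy$-edges becoming loops at $z$, is the only place requiring any care, and the identity $\deg(z)=\deg(x)+\deg(y)-2$ holds in the multigraph setting just as you state.
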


\begin{proposition}\label{MP-2}
The Eulerian-minor relation is a quasi-ordering.
\end{proposition}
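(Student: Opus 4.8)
The plan is to verify the defining conditions of a quasi-ordering—reflexivity and transitivity—directly from the operational description of the relation in Definition~\ref{MD-00}. That definition phrases ``$H$ is an Eulerian-minor of $G$'' as the existence of a finite sequence of graphs starting at $G$ and ending at $H$ in which each graph arises from its predecessor by one of three permitted operations: an edge contraction, a cycle-deletion, or the deletion of an isolated vertex. Because the relation is witnessed by such sequences, both properties reduce to elementary manipulations of these witnessing sequences rather than to any structural argument about the graphs themselves.

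First I would establish reflexivity. For any graph $G$, the one-term sequence consisting of $G$ alone takes $G$ to $G$: there are no consecutive pairs to justify, so the requirement that each graph be obtained from its predecessor by a permitted operation holds vacuously, and hence $G$ is an Eulerian-minor of itself. Next I would establish transitivity. Suppose $H$ is an Eulerian-minor of $G$ and $K$ is an Eulerian-minor of $H$. By Definition~\ref{MD-00} there is a sequence $G=G_0,G_1,\dots,G_m=H$ and a sequence $H=H_0,H_1,\dots,H_n=K$, each consecutive pair in either sequence being related by one of the three permitted operations. Gluing the two lists at their shared graph $H$ produces the single sequence $G=G_0,\dots,G_m=H=H_0,\dots,H_n=K$, every consecutive pair of which is again related by a permitted operation; this sequence exhibits $K$ as an Eulerian-minor of $G$, giving transitivity.

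I do not anticipate a substantive obstacle here, since both properties follow immediately once the relation is read off its operational definition. The only point that requires genuine care is the bookkeeping in the transitivity step, namely confirming that the two witnessing sequences are concatenated precisely at the common graph $H$—which is simultaneously the terminal graph $G_m$ of the first sequence and the initial graph $H_0$ of the second—so that the merged list contains no gap at the junction and is therefore a legitimate single witnessing sequence in the sense of Definition~\ref{MD-00}.
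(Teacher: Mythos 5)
Your verification of reflexivity and transitivity is correct, and since the paper offers no written proof of this proposition (it is declared ``clear from the definition of Eulerian-minor''), your sequence-concatenation argument is exactly the elaboration the authors leave implicit for those two properties. However, you open by asserting that the defining conditions of a quasi-ordering are reflexivity and transitivity. That matches the standard notion of a quasi-order (preorder), but it does not match the definition this paper actually uses: in Section~2 the paper defines a quasi-ordering to be a \emph{reflexive, anti-symmetric and transitive} relation. Relative to that definition your proof has a gap, since anti-symmetry is never addressed, and it does not follow from any bookkeeping with witnessing sequences alone.

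The missing piece is genuine but short. Each of the three permitted operations in Definition~\ref{MD-00} strictly decreases the quantity $|V|+|E|$: an edge contraction reduces the number of vertices by one, a cycle-deletion removes at least one edge (a loop being a cycle on one edge), and an isolated vertex deletion removes a vertex. Hence if $H$ is an Eulerian-minor of $G$ via a witnessing sequence containing at least one operation, then $|V(H)|+|E(H)|<|V(G)|+|E(G)|$. Consequently, if $H$ is an Eulerian-minor of $G$ and $G$ is an Eulerian-minor of $H$, both witnessing sequences must contain no operations at all, so $G=H$ (up to isomorphism, which is the sense in which the paper compares graphs, cf.\ ``equivalent to'' in Theorem~\ref{MT-1}). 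Adding this observation closes the gap and makes your proof complete with respect to the paper's own definition; it also yields, as a by-product, that the relation admits no infinite strictly decreasing sequence, one of the two conditions needed later for well-quasi-ordering.
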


In the following theorem, we characterize Eulerian graphs by means of excluded Eulerian-minor.

\begin{theorem}\label{MT-1}
A graph is Eulerian if and only if it contains no Eulerian-minor equivalent to $K_2$.
\end{theorem}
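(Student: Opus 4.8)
The forward implication is immediate from the machinery already in place, so the plan is to dispose of it first. Observe that $K_2$ has two vertices of degree one and hence is not Eulerian, while Proposition~\ref{MP-1} guarantees that every Eulerian-minor of an Eulerian graph is again Eulerian. Consequently an Eulerian graph can have no Eulerian-minor isomorphic to $K_2$; and since any graph possessing $K_2$ as an Eulerian-minor is itself non-Eulerian, there is likewise no Eulerian-minor merely \emph{equivalent} to $K_2$.

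For the converse I would argue the contrapositive: if $G$ is not Eulerian, then $K_2$ is an Eulerian-minor of $G$. The key bookkeeping device is the set $O(G)$ of odd-degree vertices, and I would first record how each of the three allowed operations acts on it. A cycle-deletion lowers the degree of every vertex by an even number, and deleting an isolated vertex removes an even-degree vertex, so both preserve $O(G)$ exactly; contracting an edge $xy$ replaces $x$ and $y$ by a single vertex whose degree is congruent to $\deg(x)+\deg(y)$ modulo $2$, so $|O(G)|$ changes by $0$ or by $-2$. Hence being non-Eulerian, that is $|O(G)|\ge 2$, is preserved by every cycle-deletion and every isolated-vertex-deletion, and by every edge-contraction except possibly one that contracts an edge joining the only two odd-degree vertices.

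With this in hand I would run an induction on $|V(G)|+|E(G)|$, the base case being $G=K_2$ itself. If $G$ has an isolated vertex, delete it; if $G$ contains a cycle, delete that cycle (using Theorem~\ref{TH-1} only to know cycles are available in the Eulerian parts); in either case $|V(G)|+|E(G)|$ strictly decreases while $G$ stays non-Eulerian, and the inductive hypothesis yields $K_2$ as an Eulerian-minor of the smaller graph, hence of $G$. The remaining case, in which $G$ is a forest with no isolated vertex, is where the parity control really matters and is the main obstacle: there is no cycle to delete, so I must contract, and I must avoid annihilating the last pair of odd vertices. Here I would use that every nontrivial tree component has at least two leaves, i.e. at least two odd-degree vertices, so that if $|O(G)|\ge 4$ any single edge-contraction is safe, while if $|O(G)|=2$ the leaf count forces $G$ to be a single path whose two odd ends are non-adjacent (as $G\ne K_2$), making every edge-contraction safe. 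In all subcases one operation strictly reduces $|V(G)|+|E(G)|$ and keeps $G$ non-Eulerian, so the induction goes through and produces $K_2$ as an Eulerian-minor, completing the proof.
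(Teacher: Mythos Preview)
Your proof is correct. The forward direction matches the paper's exactly, and your contrapositive for the converse is sound: the parity bookkeeping on $|O(G)|$ is accurate, the induction on $|V(G)|+|E(G)|$ terminates, and the forest analysis (in particular the observation that $|O(G)|=2$ forces a single path) is right. The parenthetical invocation of Theorem~\ref{TH-1} is unnecessary --- you only need the tautological case split ``$G$ has a cycle or it doesn't'' --- but it does no harm.

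The paper's own argument follows the same contrapositive strategy but is considerably terser: rather than running an induction one operation at a time, it deletes a maximal collection of edge-disjoint cycles in one stroke, observes (via Theorem~\ref{TH-1}) that the resulting forest $F$ must retain at least one edge since $G$ was not Eulerian, and then simply asserts that any nonempty forest has $K_2$ as an Eulerian-minor. Your induction with explicit parity tracking is effectively a rigorous unpacking of that last ``clearly''; what you gain is a self-contained verification that the reduction from forest to $K_2$ never accidentally passes through an Eulerian graph, while what the paper gains is brevity.
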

\begin{proof}
By Proposition~\ref{MP-1}, an Eulerian graph cannot have an Eulerian-minor equivalent to $K_2$. In the following, we prove the sufficiency by showing inverse negative proposition.

Suppose that $G$ is not Eulerian. By Theorem~\ref{TH-1}, the edge set of $G$ cannot be partitioned into cycles. Therefore, by cycle-deletions of all edge-disjoint cycles of $G$, we obtain a nonempty forest $F$ as an Eulerian-minor of $G$. Clearly, $F$ has an Eulerian-minor equivalent to $K_2$. By Proposition~\ref{MP-2}, $G$ has an Eulerian-minor equivalent to $K_2$.
\end{proof}

From the above two proofs we can see that cycle-deletions and isolated vertex deletions are enough deletion operations for the family of Eulerian graphs. By graph minor theorem we have the following analogical result.

\begin{theorem}
Eulerian graphs are well-quasi-ordered under the Eulerian-minor relation.
\end{theorem}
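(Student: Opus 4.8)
The plan is to reduce the statement to the Robertson--Seymour graph minor theorem~\cite{R-S}, which asserts that the class of all finite graphs is well-quasi-ordered under the ordinary minor relation $\leq_{m}$. I will use the standard reformulation of well-quasi-ordering: a quasi-order is a well-quasi-ordering precisely when every infinite sequence $x_1, x_2, \dots$ contains a \emph{good pair}, i.e.\ indices $i<j$ with $x_i \leq x_j$ (the existence of good pairs simultaneously forbids infinite antichains and infinite strictly decreasing sequences). Since the Eulerian-minor relation $\leq_{E}$ is already a quasi-order by Proposition~\ref{MP-2}, it suffices to show that every infinite sequence of Eulerian graphs has a good pair under $\leq_{E}$.

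The key step is the following comparison lemma: for Eulerian graphs $H$ and $G$, one has $H \leq_{E} G$ if and only if $H \leq_{m} G$. The forward direction is immediate, since a cycle-deletion is a sequence of ordinary edge-deletions, and contraction and isolated-vertex deletion are themselves minor operations; hence $\leq_{E}$ refines $\leq_{m}$. For the converse---the substantive direction---I would start from a minor model of $H$ in $G$: pairwise disjoint connected branch sets $\{B_v : v \in V(H)\}$ in $G$ together with, for each edge $uv \in E(H)$, a distinct edge of $G$ joining $B_u$ to $B_v$. Contracting a spanning tree of each branch set produces a graph $G^{*}$ whose vertex set is $\{\bar v : v \in V(H)\}$ together with the vertices of $G$ lying in no branch set. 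Because $G^{*}$ is obtained from the Eulerian graph $G$ by contractions alone, Proposition~\ref{MP-1} guarantees that $G^{*}$ is again Eulerian, and by construction $H$ is (isomorphic to) a subgraph of $G^{*}$ on the vertex subset $\{\bar v\}$.

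The crucial observation is then a parity count: in $G^{*}-E(H)$ every vertex of $\{\bar v\}$ loses $\deg_H(\bar v)$ (an even number) from an even degree, while every remaining vertex keeps its even degree untouched, so $G^{*}-E(H)$ is Eulerian. Equivalently, $H$ is an \emph{Eulerian} subgraph of the Eulerian graph $G^{*}$, so Proposition~\ref{MP-33} applies and gives $H \leq_{E} G^{*}$; combined with $G^{*} \leq_{E} G$ and transitivity (Proposition~\ref{MP-2}) this yields $H \leq_{E} G$, completing the lemma. Concretely, one deletes $E(G^{*})\setminus E(H)$ by cycle-deletions, using the cycle decomposition furnished by Theorem~\ref{TH-1}, after which the superfluous branch-free vertices are isolated and can be removed.

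With the lemma in hand the theorem follows quickly: given any infinite sequence $G_1, G_2, \dots$ of Eulerian graphs, the graph minor theorem produces indices $i<j$ with $G_i \leq_{m} G_j$; since every $G_k$ is Eulerian, the lemma upgrades this to $G_i \leq_{E} G_j$, a good pair for $\leq_{E}$. Hence $\leq_{E}$ admits no infinite antichain and no infinite strictly decreasing sequence, which is the desired well-quasi-ordering. I expect the main obstacle to be the converse direction of the lemma: one must justify performing all contractions \emph{before} all deletions so that the deletions collapse into cycle-deletions, and verify that the leftover edge set $E(G^{*})\setminus E(H)$ is exactly an Eulerian subgraph---this is where the hypothesis that both $H$ and $G$ are even-degree is used in an essential way, and where care is needed with parallel edges created by contraction (each surplus parallel edge contributes a digon to the cycle decomposition).
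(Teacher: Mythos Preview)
Your proposal is correct and follows essentially the same route as the paper: both reduce to the Robertson--Seymour graph minor theorem via the observation that, restricted to Eulerian graphs, the ordinary minor relation and the Eulerian-minor relation coincide. The paper states this reduction in a single sentence (``cycle-deletions and isolated vertex deletions are enough deletion operations for the family of Eulerian graphs''), whereas you spell out the comparison lemma $H\leq_{E}G \iff H\leq_{m}G$ explicitly, using the branch-set contraction and the parity argument to invoke Proposition~\ref{MP-33}; this is exactly the content the paper leaves implicit.
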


\section{Characterizing planar Eulerian graphs}

A cycle $C$ of a graph $G$ is {\it non-separating} if the removal of the vertices of $C$ from $G$ does not increase the number of connected components, and $C$ is {\it induced} if each two nonadjacent vertices of $C$ are not connected by an edge in $G$. Induced non-separating cycles are known in the literature as {\it peripheral}
cycles.

It is sometimes convenient to consider an edge consisted of two half edges. In this section, just for the simplicity of results we add an operation to Eulerian-minor.

\begin{definition}\label{MD11}
Let $e_1$ and $e_2$ be any two half-edges incident to a vertex $v$ of a graph $G$. Then the operation that of deleting the two half-edges and gluing the remaining two half edges together to form a new edge is called {\it demotion} of the vertex $v$. The demotion of $v$ is {\it admissible} if $e_1ve_2$ is a part of a peripheral cycle of $G$.
\end{definition}

In Figure~1, we pictorially illustrate an admissible demotion of a vertex $v$, where $v_1vv_2v_1$ is the peripheral cycle containing $e_1ve_2$. In this case, the demotion of $v$ equals $\big(G-\{v_1v,v_2v\}\big)\cup\{v_1v_2\}$. Notice that a demotion equals deleting a loop if the deleted two half-edges consist the loop.
\begin{figure}[htbp]
\centering
\includegraphics[width=6.0cm]{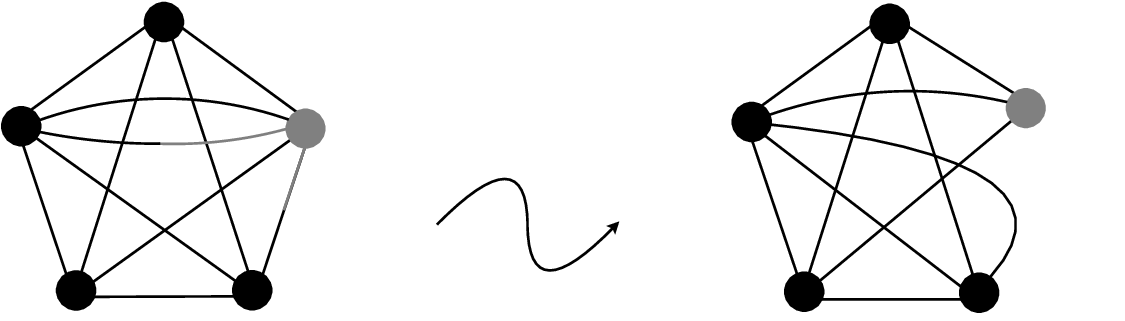}
\put(-180,30){$v_1$}
\put(-129,0){$v_2$}
\put(-121,28){$v$}
\put(-137,27){{\small $e_1$}}
\put(-128,20){{\small $e_2$}}
\put(-69,30){$v_1$}
\put(-18,0){$v_2$}
\put(-11,30){$v$}
%\put(0.5,22){$v^\prime$}
%\put(-32,20){$u$}
\put(-145,-15){$G$}
\put(-70,-15){$\big(G-\{v_1v,v_2v\}\big)\cup\{v_1v_2\}$}
\caption{An admissible demotion operation.}
\end{figure}
\begin{definition}
A graph $H$ is an {\it Eulerian-minor}* of
a graph $G$ if $H$ is obtained from $G$ by a sequence of contractions, cycle-deletions, isolated vertex deletions and admissible demotions.
\end{definition}

The following result is well-known.
\begin{lemma}[\cite{{Bondy}}]\label{PO-1}
Minors of planar graphs are planar.
\end{lemma}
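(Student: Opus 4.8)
The plan is to reduce the statement to the three elementary operations that generate the minor relation—vertex deletion, edge deletion, and edge contraction—and to verify that each one preserves planarity. Since any minor of $G$ is reached from $G$ by a finite sequence of such operations, and planarity is inherited at each step, an induction on the length of that sequence will complete the argument. Thus it suffices to treat a single operation applied to a planar graph $G$ equipped with a fixed plane embedding.

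For vertex deletion and edge deletion the conclusion is immediate: removing a vertex together with its incident edges, or removing a single edge, leaves a subgraph whose induced restriction of the ambient plane drawing of $G$ is again a plane drawing, with no new crossings introduced. Hence every subgraph of a planar graph is planar, which disposes of the deletion cases.

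The only substantive case is edge contraction, and here I would argue topologically with the plane embedding of $G$. The edge $xy$ to be contracted is drawn as a crossing-free Jordan arc joining the points representing $x$ and $y$. I would fix a small open disk $D$ around this arc that contains no other vertex and whose intersection with the rest of the drawing consists only of the initial segments of the edges incident to $x$ and $y$. Inside $D$ one can continuously slide the point $y$ along the arc onto $x$, dragging the incident edge-ends with it; because the arc is crossing-free and $D$ meets no other part of the drawing, this local isotopy creates no crossings. The outcome is a plane drawing in which $x$ and $y$ have been identified into a single vertex adjacent to all former neighbors of $x$ and $y$, that is, a plane embedding of $G/xy$ in the sense of the contraction defined above.

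The main obstacle is precisely this last step: making rigorous that the identification can be carried out without producing edge crossings. The cleanest justification is the local picture just described, which reduces the global contraction to an isotopy supported in a disk $D$ and relies only on the fact that the contracted edge is a simple arc disjoint, except at its endpoints, from the remainder of the drawing. Once this local claim is in hand, combining it with the deletion cases and the inductive reduction yields that every minor of a planar graph is planar.
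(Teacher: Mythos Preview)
Your argument is correct and is the standard textbook proof: reduce to the three elementary minor operations, observe that deletions trivially preserve a plane drawing, and handle contraction by a local isotopy in a small disk around the contracted arc. There is nothing to criticize here.

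As for comparison with the paper: there is none to make. The paper does not prove this lemma at all---it is quoted as a well-known fact with a citation to Bondy and Murty, and is used only as a black box inside the proof of Proposition~3.4 (that Eulerian-minors* of planar graphs are planar). So your proof simply fills in what the paper leaves to the literature.
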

We have an analogist result.
\begin{proposition}\label{MP-3}
Eulerian-minors* of planar graphs are planar.
\end{proposition}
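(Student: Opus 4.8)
The plan is to induct on the length of a sequence of operations realizing the Eulerian-minor* $H$ of a planar graph $G$; it then suffices to check that each of the four permitted operations turns a planar graph into a planar graph. Three of them are handled at once by Lemma~\ref{PO-1}: a cycle-deletion $G-E(C)$ deletes edges, a deletion of an isolated vertex deletes a vertex, and an edge contraction contracts an edge, so in every one of these cases the output is an ordinary minor of $G$ and hence planar. In particular the degenerate demotion that removes a loop is just an edge-deletion and needs no separate treatment. Thus the real content of the proposition is confined to a single admissible demotion, and I would spend the rest of the argument there.

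So suppose the operation is an admissible demotion at a vertex $v$ along the half-edges $e_1,e_2$, where $e_1$ lies on $v_1v$, $e_2$ lies on $v_2v$, and $v_1vv_2$ is part of a peripheral cycle $D$; the result is $\big(G-\{v_1v,v_2v\}\big)\cup\{v_1v_2\}$. The first key step is to produce, from any fixed planar embedding of the component of $G$ containing $D$, a face bounded by $D$. Here I would use that $D$ is peripheral: since $D$ is non-separating, $G-V(D)$ is connected, and being disjoint from the Jordan curve carrying $D$ it must lie entirely in one of the two regions into which $D$ cuts the plane, with all edges from it to $D$ entering that same region; since $D$ is induced it has no chord, so nothing lies in the other region and that region is a face $F$ bounded exactly by $D$.

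The second key step is the rerouting. Because $v_1,v,v_2$ are consecutive along $D$, they occur consecutively on the boundary of the disk $F$, so I can draw a new edge $v_1v_2$ through the interior of $F$ without meeting any existing edge, and then erase $v_1v$ and $v_2v$; the rest of the embedding, including any further edges incident to $v$, is left intact. This produces a planar drawing of precisely $\big(G-\{v_1v,v_2v\}\big)\cup\{v_1v_2\}$, which finishes the demotion case and hence, by the induction, the proposition. The one step I expect to require genuine care is the face-existence claim for $D$: the Jordan-curve argument above is clean for simple graphs, but I would want to double-check the degenerate situations (a triangular $D$, where $v_1v_2$ is drawn parallel to an already present edge inside $F$, and any parallel edges running along $D$) to be sure that the rerouting inside $F$ stays crossing-free.
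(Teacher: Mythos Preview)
Your proposal is correct and follows essentially the same approach as the paper: reduce to the admissible demotion via Lemma~\ref{PO-1}, use the peripheral cycle together with the Jordan(--Sch\"{o}nflies) theorem to exhibit a face bounded by $D$, and then reroute $v_1v_2$ through that face. Your write-up is in fact slightly more explicit than the paper's in justifying why one side of $D$ is empty (you separately invoke the non-separating and induced hypotheses), and your cautionary remarks about degenerate cases are reasonable but not needed for correctness.
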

\begin{proof}
By Lemma~\ref{PO-1}, we only need to check that an admissible demotion preserves the planarity. If the admissible demotion in considering equals the deletion of a loop, then we are done. Hence we assume that the two half edges $e_1$ and $e_2$ defined in Definition~\ref{MD11} come from two different edges $v_1v$ and $vv_2$. Consider an embedding of $G$ in the plane $\mathbb{R}^2$, and let $C$ be a peripheral cycle containing $v_1vv_2$. By Jordan-Sch\"{o}nflies Theorem~\cite{Bondy}, $C$ partitions the rest of $\mathbb{R}^2$ into two
disjoint arcwise-connected open sets. As $C$ is peripheral, one of these sets contains no vertices and no edges of $G$, and hence it is a face of this embedding of $G$. Thus, in this embedding of $G$, $v_1v$ and $vv_2$ are successive in the cyclic ordering of the edges at $v$, and therefore the admissible demotion of $v$ produces an embedding of the resulting graph $\big(G-\{v_1v,v_2v\}\big)\cup\{v_1v_2\}$ in $\mathbb{R}^2$.
\end{proof}

\begin{figure}[htbp]
\centering
\includegraphics[width=3.0cm]{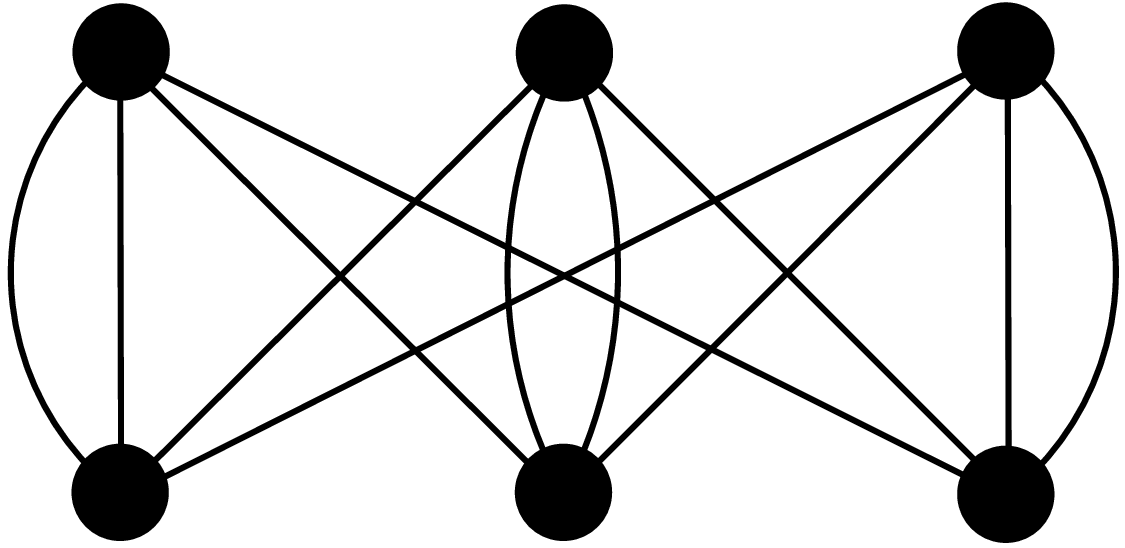}
\caption{$K_{3,3}^\prime$.}
\end{figure}

\begin{theorem}\label{MT-2}
Let $K_{3,3}^\prime$ be the graph shown in Figure~2. Then an Eulerian graph is planar if and only if it contains no Eulerian-minor* equivalent to $K_5$ or $K_{3,3}^\prime$.
\end{theorem}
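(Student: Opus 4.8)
The plan is to prove necessity directly from the preservation result already established, and to prove sufficiency by contraposition, exhibiting one of the two obstructions as an Eulerian-minor* of any non-planar Eulerian graph. For necessity, suppose $G$ is planar. Since $K_5$ is non-planar and $K_{3,3}^\prime$ has $K_{3,3}$ as a minor, both $K_5$ and $K_{3,3}^\prime$ are non-planar. By Proposition~\ref{MP-3}, every Eulerian-minor* of $G$ is planar, so $G$ can have no Eulerian-minor* isomorphic to $K_5$ or to $K_{3,3}^\prime$.

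For sufficiency I argue the contrapositive: assume $G$ is Eulerian and non-planar and produce $K_5$ or $K_{3,3}^\prime$ as an Eulerian-minor*. Since a non-planar obstruction lives in a single connected component, I may assume $G$ is connected. By Wagner's theorem~\cite{Wagner}, $G$ has a minor isomorphic to $K_5$ or to $K_{3,3}$, realized by disjoint connected branch sets $V_1,\dots,V_n$ (with $n=5$ or $6$) and at least one edge joining each adjacent pair. Contracting each branch set along a spanning tree is a sequence of edge contractions, so by Proposition~\ref{MP-1} the resulting graph $G^\prime$ is again Eulerian; inside it lies a \emph{core} on the contracted vertices consisting of exactly one edge for each adjacent pair. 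In the $K_5$ case this core is $4$-regular, hence an Eulerian subgraph of $G^\prime$, and Proposition~\ref{MP-33} shows it is an Eulerian-minor of $G^\prime$: one deletes the (necessarily Eulerian) complement $G^\prime-E(\text{core})$ by cycle-deletions and discards the now-isolated vertices. Composing with the contractions and using transitivity of the Eulerian-minor relation (Proposition~\ref{MP-2}) yields $K_5$ as an Eulerian-minor of $G$, with no demotions required.

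The $K_{3,3}$ case is the crux, because the $K_{3,3}$-core is $3$-regular and therefore \emph{not} Eulerian, so Proposition~\ref{MP-33} does not apply to it directly. The idea is to enlarge the core to an Eulerian subgraph before extracting it. Each branch vertex has even degree in $G^\prime$ but uses an odd number ($3$) of core edges, so each of the six branch vertices meets an odd (hence positive) number of non-core edges while every other vertex meets an even number of them. Thus in $F=G^\prime-E(\text{core})$ the odd-degree vertices are exactly the six branch vertices, and a minimal edge set with this odd-degree set decomposes into three edge-disjoint paths pairing them up. Adjoining these three paths to the core produces a subgraph $H$ in which every vertex has even degree, i.e.\ an Eulerian subgraph of $G^\prime$; by Proposition~\ref{MP-33} it is an Eulerian-minor of $G^\prime$. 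Contracting the interior edges of each path shrinks the path to a single edge, and applying admissible demotions where the peripheral-cycle condition permits collapses the augmented graph to the canonical graph $K_{3,3}^\prime$ of Figure~2.

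The main obstacle is precisely this last case, and it has two delicate points. First, one must guarantee that the three parity-fixing paths exist and can be chosen internally disjoint from the core and from one another; this requires the six branch vertices to be suitably linked through the non-core edges of $G^\prime$, which I would arrange by choosing the branch sets and the single core edges economically and, if necessary, by rerouting through the remaining Eulerian structure. Second, one must verify that the reduction of $H$ to $K_{3,3}^\prime$ stays within the permitted operations — in particular that each invoked demotion is \emph{admissible}, meaning the two suppressed half-edges lie on a peripheral (induced, non-separating) cycle of the current graph. Confirming this admissibility, and checking that every pairing pattern of the branch vertices collapses to the single graph $K_{3,3}^\prime$, is where the argument is hardest; this is exactly why the demotion operation was introduced, since it allows the many intermediate Eulerian augmentations to be reduced to one named excluded minor.
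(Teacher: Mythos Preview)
Your necessity argument and the $K_5$ case are fine and match the paper. The genuine gap is the $K_{3,3}$ case, and you have correctly located it: you do not actually prove that three parity-fixing paths can be chosen so that, after contracting their interiors, the resulting extra edges together with the core can be reduced by \emph{admissible} demotions to $K_{3,3}^\prime$. Both obstacles you flag are real. A minimal $T$-join in $G'-E(\text{core})$ need not consist of three internally vertex-disjoint paths avoiding the branch vertices, so after contraction the three ``extra'' edges may share endpoints, create multi-edges, or fail to form the perfect matching that $K_{3,3}^\prime$ requires. And even when they do, verifying that each demotion you need is supported by a peripheral cycle is not automatic --- it is a case-by-case check that you have not carried out. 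As written, the $K_{3,3}$ case is a plan, not a proof.

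The paper closes this gap by a different and more robust reduction. Rather than hunting for three paths, it contracts a spanning forest of $G_1-E(H_1)$ rooted at the six $K_{3,3}$-vertices, which collapses the entire component onto those six vertices; cycle-deletions then strip the resulting $6$-vertex Eulerian multigraph down to a minimal one $G_4$ still containing the $K_{3,3}$-core. Minimality forces $G_4-E(H_1)$ to be acyclic, and a short degree-sequence enumeration leaves only six possibilities, one of which is $K_{3,3}^\prime$ itself. For each of the remaining five the paper exhibits an explicit witness peripheral cycle and a single contraction that produce $K_5$ --- so in fact most configurations in the $K_{3,3}$ case yield $K_5$, not $K_{3,3}^\prime$, contrary to what your sketch anticipates. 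The rooted-forest contraction is the missing idea: it replaces your delicate path-finding and disjointness issues with a finite case analysis on six vertices, where admissibility of each demotion can be (and is) checked by hand.
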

\begin{proof}
It is well-known that $K_5$ and $K_{3,3}^\prime$ are non-planar graphs. By Proposition~\ref{MP-3}, a planar Eulerian graph cannot contain $K_5$  or $K_{3,3}^\prime$ as an Eulerian-minor*.

For the converse, we suppose that the Eulerian graph $G$ in considering is not planar. Then, by Kuratowski Theorem~\cite{Thomassen}, $G$ contains a subgraph $H$ isomorphic to a subdivision of either $K_5$ or $K_{3,3}$. By contracting the subdivided edges of $H$, we obtain an Eulerian graph $G_1$ as an Eulerian-minor of $G$ such that $G_1$ has a subgraph $H_1$ isomorphic to either $K_5$ or $K_{3,3}$.

{\it Case A}. $H_1\cong K_5$.

Recall that $K_5$ is an Eulerian subgraph of the Eulerian graph $G_1$ and $G_1$ is an Eulerian-minor of $G$. Then, by Propositions~\ref{MP-33}~and~\ref{MP-2}, $G$ has an Eulerian-minor equivalent to $K_5$.

{\it Case B}. $H_1\cong K_{3,3}$.

We find a spanning forest $F$ of $G_1-E(H_1)$ rooted with the vertices of $H_1$ for the component of $G_1$ containing $H_1$. Notice that $F$ has six trees and each of these trees exactly contains one vertex of $H_1$. We can easily find such a rooted spanning forest just by deleting some edges on a spanning tree of $G_1$. By contracting all edges of $F$, we obtain an Eulerian-minor $G_2$ of $G$ such that $G_2$ has a component $G_3$ on $|V(H_1)|$ vertices and including $H_1$ as a subgraph. By cycle-deletions and isolated vertex deletions, we force $G_3$ to be an Eulerian-minor of $G$. By Proposition~\ref{MP-1}, $G_3$ is Eulerian. Again by cycle-deletions of $G_3$, we obtain an Eulerian-minor $G_4$ of $G$ having minimal number of edges with respect to containing $H_1$. Notice that $G_4$ is also an Eulerian graph.%In the following we consider two cases.

Let $\{a_1,a_2,a_3\}$ and $\{b_1,b_2,b_3\}$ be the bipartition of $H_1$. If $a_ia_j,b_sb_t\in E(G_4)$ for $i\neq j,s\neq t$, say $a_1a_2,b_1b_2\in E(G_4)$, then $G_4/ a_3b_3$ has a subgraph isomorphic to $K_5$. Then, by Propositions~\ref{MP-33}~and~\ref{MP-2}, $G$ has an Eulerian-minor equivalent to $K_5$. Hence, in the following, we assume that $b_1b_2, b_1b_3,b_2b_3\notin E(G_4)$.

\begin{figure}[htbp]
\centering
\includegraphics[width=10.0cm]{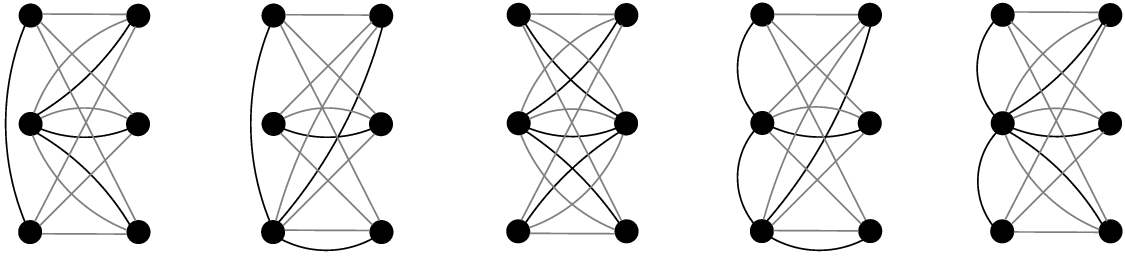}
\put(-290,65){$a_1$}
\put(-246,62){$b_1$}
\put(-228,65){$a_1$}
\put(-184,62){$b_1$}
\put(-166,65){$a_1$}
\put(-122,62){$b_1$}
\put(-104,65){$a_1$}
\put(-61,62){$b_1$}
\put(-44,65){$a_1$}
\put(0,62){$b_1$}
\put(-294,33){$a_2$}
\put(-246,31){$b_2$}
\put(-232,33){$a_2$}
\put(-185,31){$b_2$}
\put(-167,33){$a_2$}
\put(-123,31){$b_2$}
\put(-105,33){$a_2$}
\put(-61,31){$b_2$}
\put(-44,33){$a_2$}
\put(0,31){$b_2$}
\put(-290,3){$a_3$}
\put(-247,0){$b_3$}
\put(-228,3){$a_3$}
\put(-185,0){$b_3$}
\put(-166,3){$a_3$}
\put(-123,0){$b_3$}
\put(-104,3){$a_3$}
\put(-62,0){$b_3$}
\put(-44,3){$a_3$}
\put(-1,0){$b_3$}
\put(-265,-10){$H_1^1$}
\put(-205,-10){$H_1^2$}
\put(-140,-10){$H_1^3$}
\put(-80,-10){$H_1^4$}
\put(-20,-10){$H_1^5$}
\caption{All possible configurations of $G_4~(\ncong K_{3,3}^\prime)$ in the case $b_1b_2, b_1b_3,b_2b_3\notin E(G_4)$.}
\end{figure}

In Figures~2 and~3, we illustrate all possible configurations of $G_4$ according to the degree sequence of $G_4$. Since $H_1$ is a subgraph of $G_4$, we only need to consider all possible configurations of $G_4-E(H_1)$. If $G_4$ is 4-regular (the degree sequence of $G_4$ is $(4,4,4,4,4,4)$), then the degree sequence of $G_4-E(H_1)$ is $(1,1,1,1,1,1)$ and therefor there is only one configuration of $G_4-E(H_1)$, i.e., $G_4\cong K_{3,3}^\prime$; There are exactly two configurations for the degree sequence $(4,4,4,4,4,6)$ of $G_4$ (resp. $(1,1,1,1,1,3)$ of $G_4-E(H_1)$), see Graphs $H_1^1$ and $H_1^2$ in Figure~3, where the edges of $H_1$ are grayed; There are also exactly two configurations if the degree sequence of $G_4$ (resp. $G_4-E(H_1)$) is $(4,4,4,4,6,6)$ (resp. $(1,1,1,1,3,3)$) as in Graphs $H_1^3$ and $H_1^4$; There is again only one configuration if the degree sequence of $G_4$ (resp. $G_4-E(H_1)$) is $(4,4,4,4,4,8)$ (resp. $(1,1,1,1,1,5)$), see Graph $H_1^5$ in Figure~3; If $G_4$ has more degree 6 or degree 8 or even more greater degree vertices, then $|E(G_4)\setminus E(H_1)|\geq 6$, and therefore $G_4-E(H_1)$ contains cycles. Taking cycle-deletions for these cycles we obtain an Eulerian-minor still containing $H_1$ as a subgraph. This contradicts the minimality of $G_4$.

For each of the five graphs in Figure~3, we describe the sequences of contractions and admissible demotions yielding $K_5$ as an Eulerian-minor*. For each demotion we indicate a peripheral cycle showing that the demotion is admissible, and call such a peripheral cycle a {\it witness cycle}.

{\it Case B.$1$}. $H_1^1$.

\begin{enumerate}
  \item $H_1^{11}:=\big(H_1^1-\{b_1a_2,a_2b_2\}\big)\cup\{b_1b_2\}$ and $b_1a_2b_2a_1b_1$ is a witness cycle;
  \item $H_1^{11}/a_2b_3$.
\end{enumerate}

{\it Case B.$2$}. $H_1^2$.

\begin{enumerate}
  \item $H_1^{21}:=\big(H_1^2-\{b_1a_3,a_3b_3\}\big)\cup\{b_1b_3\}$ and $b_1a_3b_3a_1b_1$ is a witness cycle;
  \item $H_1^{21}/a_2b_2$.
\end{enumerate}

{\it Case B.$3$}. $H_1^3$.

\begin{enumerate}
  \item $H_1^{31}:=\big(H_1^3-\{b_1a_2,a_2b_2\}\big)\cup\{b_1 b_2\}$ and $b_1a_2b_2a_1b_1$ is a witness cycle;
   \item $H_1^{32}:=\big(H_1^{31}-\{a_1b_2,b_2a_3\}\big)\cup\{a_1 a_3\}$ and $a_1b_2a_3b_1a_1$ is a witness cycle;
  \item $H_1^{32}/a_2b_3$.
\end{enumerate}

{\it Case B.$4$}. $H_1^4$.

\begin{enumerate}
  \item $H_1^{41}:=\big(H_1^4-\{b_1a_3,a_3b_3\}\big)\cup\{b_1 b_3\}$ and $b_1a_3b_3a_1b_1$ is a witness cycle;
  \item $H_1^{41}/a_1b_2$.
\end{enumerate}

{\it Case B.$5$}. $H_1^5$.

\begin{enumerate}
  \item $H_1^{51}:=\big(H_1^5-\{b_1a_2,a_2b_2\}\big)\cup\{b_1 b_2\}$ and $b_1a_2b_2a_1b_1$ is a witness cycle;
  \item $H_1^{51}/a_1b_3$.
\end{enumerate}

\end{proof}

By Jordan-Sch\"{o}nflies Theorem and induction, we can easily construct planar Eulerian graphs from planar simple graphs by adding edges. Notice that there is a surjection from planar Eulerian graphs onto planar simple graphs. Hence Theorem~\ref{MT-2} can be considered as an Eulerian-minor analog of Wagner theorem.

A graph is {\it outer-planar} if it has an embedding in the plane such that all of the vertices lie on the outer boundary. A graph is outer-planar if and only if it contains neither $K_4$ nor $K_{2,3}$ as a minor \cite{Chartrand}. Here is an Eulerian-minors* version of this result for outer-planar Eulerian graphs. And one can show it by similar methods in the proofs of Proposition~\ref{MP-3} and Theorem~\ref{MT-2}.
\begin{proposition}
Eulerian-minors* of outer-planar graphs are outer-planar.
\end{proposition}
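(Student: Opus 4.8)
The plan is to follow the template of Proposition~\ref{MP-3}, since an Eulerian-minor* is assembled from contractions, cycle-deletions, isolated vertex deletions and admissible demotions. First I would record that the class of outer-planar graphs is closed under the ordinary minor relation: by the excluded-minor characterization, a graph is outer-planar iff it has neither $K_4$ nor $K_{2,3}$ as a minor \cite{Chartrand}, and any ``no fixed minor'' class is minor-closed by transitivity of the minor relation. Contractions, cycle-deletions (which are edge deletions of $E(C)$) and isolated vertex deletions are all ordinary minor operations, so each of these three preserves outer-planarity automatically. Exactly as in Proposition~\ref{MP-3}, the only operation that needs separate treatment is the admissible demotion.

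Second, I would analyze an admissible demotion of a vertex $v$ in the same way as Proposition~\ref{MP-3}. If it coincides with deleting a loop, outer-planarity is clearly preserved. Otherwise the two half-edges come from edges $v_1v$ and $vv_2$, and by admissibility there is a peripheral cycle $C$ through $v_1vv_2$. Fix an outer-planar embedding of $G$ (which is in particular a planar embedding). By the Jordan-Sch\"{o}nflies Theorem \cite{Bondy}, $C$ splits the plane into two open regions, and since $C$ is peripheral one of them, say $f$, contains no vertex and no edge of $G$; hence $f$ is a face, and $v_1v$ and $vv_2$ are consecutive in the rotation at $v$. Rerouting the pair $v_1v,vv_2$ to a single edge $v_1v_2$ drawn inside $f$ then produces a planar embedding of $\big(G-\{v_1v,v_2v\}\big)\cup\{v_1v_2\}$, just as before.

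The extra step, which I expect to be the main obstacle, is to verify that this new embedding is still \emph{outer}-planar, i.e.\ that every vertex still lies on the outer face. Here I would split according to which side of $C$ is the empty face $f$. If $f$ is an inner face, the new edge $v_1v_2$ is drawn strictly inside $C$, the degree of $v$ only decreases, and no vertex is enclosed, so all vertices remain on the outer boundary. If instead $f$ is the unbounded face, then its boundary $C$ already carries every vertex; since $C$ is induced it has no chords, so $G$ reduces to $C$ itself (possibly together with isolated vertices), in which case $v$ has degree $2$ and the demotion simply turns $v$ into an isolated vertex, trivially preserving outer-planarity. In all cases the demoted graph admits an outer-planar embedding, and combined with the minor-closure argument of the first step this gives the claim. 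The only delicate point is precisely this case analysis on the location of $f$, ensuring that no vertex is trapped off the outer face; the remainder is a routine transcription of the proofs of Proposition~\ref{MP-3} and Theorem~\ref{MT-2}.
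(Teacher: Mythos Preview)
Your argument is correct and follows exactly the route the paper indicates: the paper does not actually supply a proof here but only remarks that ``one can show it by similar methods in the proofs of Proposition~\ref{MP-3} and Theorem~\ref{MT-2}'', and your write-up is precisely that transcription together with the one extra verification that all vertices remain on the outer face after an admissible demotion. One small simplification worth noting: your second case is never actually forced, since in any outer-planar drawing no vertex can lie strictly in the bounded region determined by the cycle $C$, and as $C$ is induced that bounded region is therefore already a face of the embedding --- hence the empty face $f$ may always be taken on the bounded side and your first case alone suffices.
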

\begin{figure}[htbp]
\centering
\includegraphics[width=6.0cm]{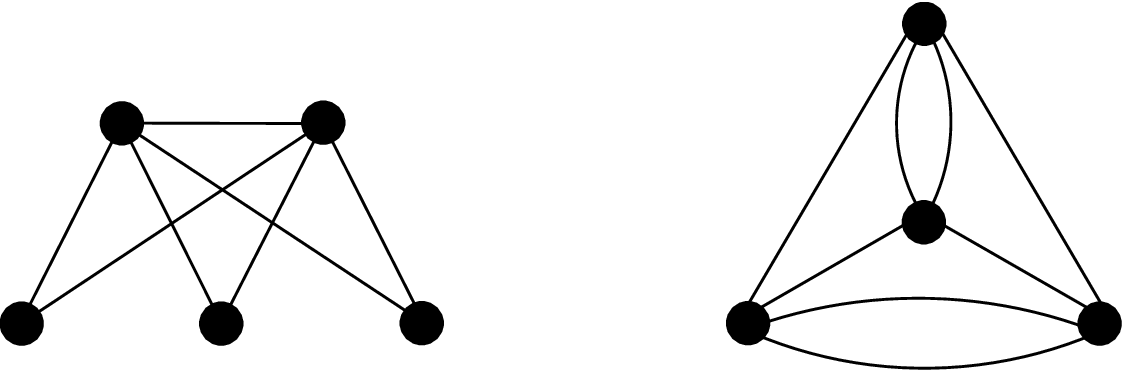}
\put(-145,-15){$K_{2,3}^\prime$}
\put(-35,-15){$K_{4}^\prime$}
\caption{Forbidden Eulerian-minors of outer-planar graphs.}
\end{figure}
\begin{theorem}
Let $K_{2,3}^\prime$ and $K_{4}^\prime$ be the two graphs shown in Figure~4. Then an Eulerian graph is outer-planar if and only if it contains no Eulerian-minor* equivalent to $K_{2,3}^\prime$ or $K_{4}^\prime$.
\end{theorem}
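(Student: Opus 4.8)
The plan is to follow the template of Theorem~\ref{MT-2} in outline, replacing the Kuratowski pair $(K_5,K_{3,3})$ by the outer-planar forbidden pair $(K_4,K_{2,3})$ from \cite{Chartrand}, and the targets $(K_5,K_{3,3}^\prime)$ by $(K_4^\prime,K_{2,3}^\prime)$. For necessity I would invoke the preceding proposition (Eulerian-minors* of outer-planar graphs are outer-planar) together with the fact that $K_4^\prime$ and $K_{2,3}^\prime$ are not outer-planar, since each contains $K_4$ or $K_{2,3}$ as a subgraph; hence an outer-planar Eulerian graph admits no Eulerian-minor* equivalent to either. For sufficiency I argue by contraposition: if an Eulerian graph $G$ is not outer-planar, then by \cite{Chartrand} it has a subgraph that is a subdivision of $K_4$ or of $K_{2,3}$, and contracting the subdivided edges yields an Eulerian-minor $G_1$ with a subgraph $H_1\cong K_4$ or $H_1\cong K_{2,3}$.

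Next I would reduce to a minimal model exactly as in Theorem~\ref{MT-2}: contract a spanning forest rooted at $V(H_1)$ to collapse the host component onto $|V(H_1)|$ vertices, then apply cycle-deletions and isolated-vertex deletions to reach an Eulerian-minor $G_4$ that contains $H_1$ and is edge-minimal with this property. Because $G_4$ is Eulerian while both $K_4$ (3-regular) and $K_{2,3}$ (its two branch vertices have degree $3$) carry odd-degree vertices, the graph $G_4-E(H_1)$ has odd degree at precisely those vertices and even degree elsewhere; minimality forces $G_4-E(H_1)$ to be acyclic, since any cycle could be deleted, which bounds its number of edges and leaves only finitely many configurations to inspect. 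This is the step that differs most from Theorem~\ref{MT-2}: there Case~A ($K_5$) was immediate because $K_5$ is already Eulerian, whereas here \emph{both} base graphs are non-Eulerian, so the minimal-model analysis must be run for each.

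I would then treat the two bases. For $H_1\cong K_4$ on $\{c_1,c_2,c_3,c_4\}$, the all-odd acyclic degree condition leaves the matching configuration $(1,1,1,1)$, which is exactly $K_4^\prime$ (a doubled perfect matching adjoined to $K_4$), and the star configuration $(3,1,1,1)$; the latter I would reduce to $K_4^\prime$ by an admissible demotion at the degree-$6$ vertex followed by a contraction, exhibiting a witness peripheral cycle for the demotion just as in Case~B of Theorem~\ref{MT-2}. For $H_1\cong K_{2,3}$ with parts $\{a_1,a_2\}$ and $\{b_1,b_2,b_3\}$, the minimal configuration is $K_{2,3}^\prime$ itself, the single added edge $a_1a_2$ restoring even degrees; if instead $G_4$ carries an $a$-side edge together with a $b$-side edge, then contracting a suitable cross-edge $a_ib_j$ collapses $G_4$ onto four mutually adjacent vertices, producing $K_4^\prime$ after clearing superfluous parallel edges, in direct analogy with the $G_4/a_3b_3$ step that produced $K_5$ in Theorem~\ref{MT-2}.

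The main obstacle will be the bookkeeping of the finite case analysis: verifying that the enumerated degree sequences are exhaustive, that is, that any higher-degree vertex forces a deletable cycle in $G_4-E(H_1)$ and so contradicts minimality, and, for each surviving configuration, checking that every demotion used is admissible by producing an explicit witness peripheral cycle. None of these steps is conceptually new beyond Theorem~\ref{MT-2}; the only genuinely extra work is that the $K_4$ base, unlike the Eulerian $K_5$, must itself be ``Eulerianized'' into $K_4^\prime$, so its minimal-model configurations have to be listed and reduced rather than read off directly.
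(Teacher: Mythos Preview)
Your plan is exactly what the paper intends: the paper gives no detailed proof of this theorem, stating only that ``one can show it by similar methods in the proofs of Proposition~3.3 and Theorem~3.4,'' and your outline (necessity via the preceding outer-planarity-preserving proposition, sufficiency via a $K_4$- or $K_{2,3}$-subdivision, contraction onto $V(H_1)$, and a finite residual case analysis) is precisely that template.

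Your case bookkeeping needs tightening, however. In the $K_4$ branch the star configuration $(3,1,1,1)$ reduces to $K_4'$ by a single admissible demotion at the degree-$6$ vertex (any triangle through it is peripheral); no contraction is required afterwards. More importantly, in the $K_{2,3}$ branch your ``$a$-side edge together with a $b$-side edge'' dichotomy cannot occur: in the minimal $G_4$ the graph $G_4-E(H_1)$ is a forest in which exactly $a_1,a_2$ have odd degree and each $b_i$ has even degree, and a forest with exactly two odd-degree vertices is a path between them together with isolated points. Hence the configurations are the direct edge $a_1a_2$ (which is $K_{2,3}'$) and $a_1$--$a_2$ paths of length $2$, $3$, or $4$ through some of the $b_i$; the analogy with the $G_4/a_3b_3$ reduction to $K_5$ in Theorem~3.4 does not carry over, since adding only $a_1a_2$ and some $b_ib_j$ leaves $b_i,b_j$ with odd degree. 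Each longer-path configuration is handled by admissible demotions (triangles or $4$-cycles through the doubled $a_kb_\ell$ edges are peripheral) that shorten the path until one reaches $K_{2,3}'$. Once you replace your dichotomy with this path enumeration, the argument goes through as written.
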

\section{4-regular planar graphs}
A {\it bouquet} $B_n$ is a graph having exactly one vertex and $n$ loops, and a {\it generalized bouquet} $\mathcal{B}_n$ is obtained from $B_n$ by replacing the loops with cycles.
\begin{proposition}\label{MP-4}
Let $\mathcal{B}_n$ be a generalized bouquet with $n\geq3$. Then a non-empty Eulerian graph not containing any $\mathcal{B}_n$ has a peripheral cycle.
\end{proposition}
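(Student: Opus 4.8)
The plan is to recognize that the forbidden-bouquet hypothesis is really a bound on the cycle rank (first Betti number) $\beta(G)=|E(G)|-|V(G)|+c(G)$, and then to classify the very restricted graphs that satisfy it. First I would pass to a single nontrivial component. A peripheral cycle of a component $G_0$ that carries an edge is automatically peripheral in $G$: it is chordless in $G$ since there are no edges between distinct components, and deleting its vertices cannot raise the component count of $G$ once it fails to raise that of $G_0$. Moreover the exclusion hypothesis is inherited by every component (any $\mathcal{B}_n$ inside a component is one inside $G$). So I may assume $G$ is connected, Eulerian, and has at least one edge, and I aim to produce a peripheral cycle.

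The crux is the observation that ``containing a $\mathcal{B}_n$'' with $n\ge 3$ is equivalent to $\beta(G)\ge 3$. If $\beta(G)\ge 3$, I would contract a spanning tree of $G$; since edge contraction is an Eulerian-minor operation and preserves even degrees, the result is the bouquet $B_{\beta(G)}$, a single vertex carrying $\beta(G)$ loops. Reading each loop as a degenerate cycle, this is exactly a generalized bouquet $\mathcal{B}_{\beta(G)}$ with $\beta(G)\ge 3$ petals, so $G$ would contain some $\mathcal{B}_n$ ($n\ge 3$) as an Eulerian-minor, contrary to hypothesis. Hence the hypothesis forces $\beta(G)\le 2$.

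Next I would classify connected Eulerian graphs with $\beta(G)\le 2$ by means of the identity $\sum_{v}(\deg v-2)=2(\beta(G)-1)$. Every degree is even and at least $2$, so each summand is a nonnegative even integer. If $\beta(G)=1$ then all degrees equal $2$ and $G$ is a single cycle. If $\beta(G)=2$ the summands total $2$, forcing exactly one vertex $v$ of degree $4$ and all others of degree $2$; since a degree-$2$ vertex lies on a unique cycle of any cycle decomposition and $v$ lies on exactly two, tracing the two edge-pairs leaving $v$ through degree-$2$ vertices shows that $G$ consists of two cycles meeting only at $v$, i.e. a figure-eight $\mathcal{B}_2$. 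No theta or dumbbell configuration can arise, as each of those has two odd-degree branch vertices, incompatible with being Eulerian.

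Finally I would exhibit the peripheral cycle in each case: the whole graph when $\beta(G)=1$, and either constituent cycle $Z$ when $\beta(G)=2$. In the latter case $Z$ is chordless because every edge outside $Z$ lies on the other cycle and meets $Z$ only at the shared vertex, and $G-V(Z)$ is a single path, hence connected, so $Z$ is non-separating. I expect the work to be conceptual rather than computational: the main obstacle is pinning down the interpretation of $\mathcal{B}_n$ so that the hypothesis is genuinely equivalent to $\beta(G)\le 2$ (this hinges on counting loops as cycles, so that the bouquet $B_\beta$ obtained from a spanning-tree contraction qualifies as a $\mathcal{B}_\beta$); once that equivalence is in place, the degree-sum classification and the verification that the displayed cycles are induced and non-separating are routine.
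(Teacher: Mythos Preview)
Your argument rests on reading ``not containing any $\mathcal{B}_n$'' as an Eulerian-minor exclusion, and this is the wrong reading. In the paper the hypothesis is subgraph containment: the proof itself writes ``does not contain a generalized bouquet $\mathcal{B}_n$ as a subgraph,'' and the application immediately after the proposition argues that $4$-regular graphs satisfy the hypothesis because $\Delta(\mathcal{B}_n)=2n\ge 6$ exceeds the maximum degree of any subgraph of a $4$-regular graph. That degree argument would be meaningless for a minor exclusion.

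With the intended reading, your key equivalence ``no $\mathcal{B}_n$ with $n\ge 3$ $\Longleftrightarrow$ $\beta(G)\le 2$'' is false, and the classification that follows collapses. The octahedron is $4$-regular, hence contains no $\mathcal{B}_n$ ($n\ge 3$) as a subgraph, yet has $\beta=12-6+1=7$; more generally every $4$-regular planar graph on more than one vertex satisfies the hypothesis but is neither a single cycle nor a figure-eight. Your spanning-tree contraction produces $B_{\beta(G)}$ as a \emph{minor}, not as a subgraph, so it does not contradict the hypothesis. Thus the proof proves only a much weaker statement (about graphs with cycle rank at most $2$) and misses precisely the graphs the proposition is meant to cover.

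The paper's actual argument is an induction on $|E(G)|$: choose an induced cycle $C$, apply the induction hypothesis to the Eulerian graph $G-E(C)$ to obtain a peripheral cycle $C_1$ there, and if neither $C$ nor $C_1$ is peripheral in $G$, pass into a component of $G-V(C_1)$ and iterate. Finiteness terminates the descent at a non-separating cycle, and any chord is then used to shorten it to an induced one.
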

\begin{proof}
We prove the result by induction on the number of edges $|E(G)|$. If $|E(G)|=1$, then $G$ has a loop. Obviously, the loop is peripheral. Now we assume that the result holds for all such Eulerian graphs having fewer edges than $G$. In the following we seek a peripheral cycle for $G$. By Theorem~\ref{TH-1}, $G$ has cycles, and therefore $G$ has induced cycles. Let $C$ be an induced cycle of $G$. Clearly, $G-C$ is Eulerian and does not contain a generalized bouquet $\mathcal{B}_n$ as a subgraph. Then, by induction hypothesis, $G-E(C)$ has a peripheral cycle $C_1$. There is nothing to prove for peripheral cycle $C$ or $C_1$ in $G$. Hence we now assume that both $C$ and $C_1$ are non-peripheral in $G$. And then $V(C)\cap V(C_1)\neq\emptyset$, otherwise $C_1$ is peripheral in $G$.

\begin{figure}[htbp]
\centering
\includegraphics[width=5.0cm]{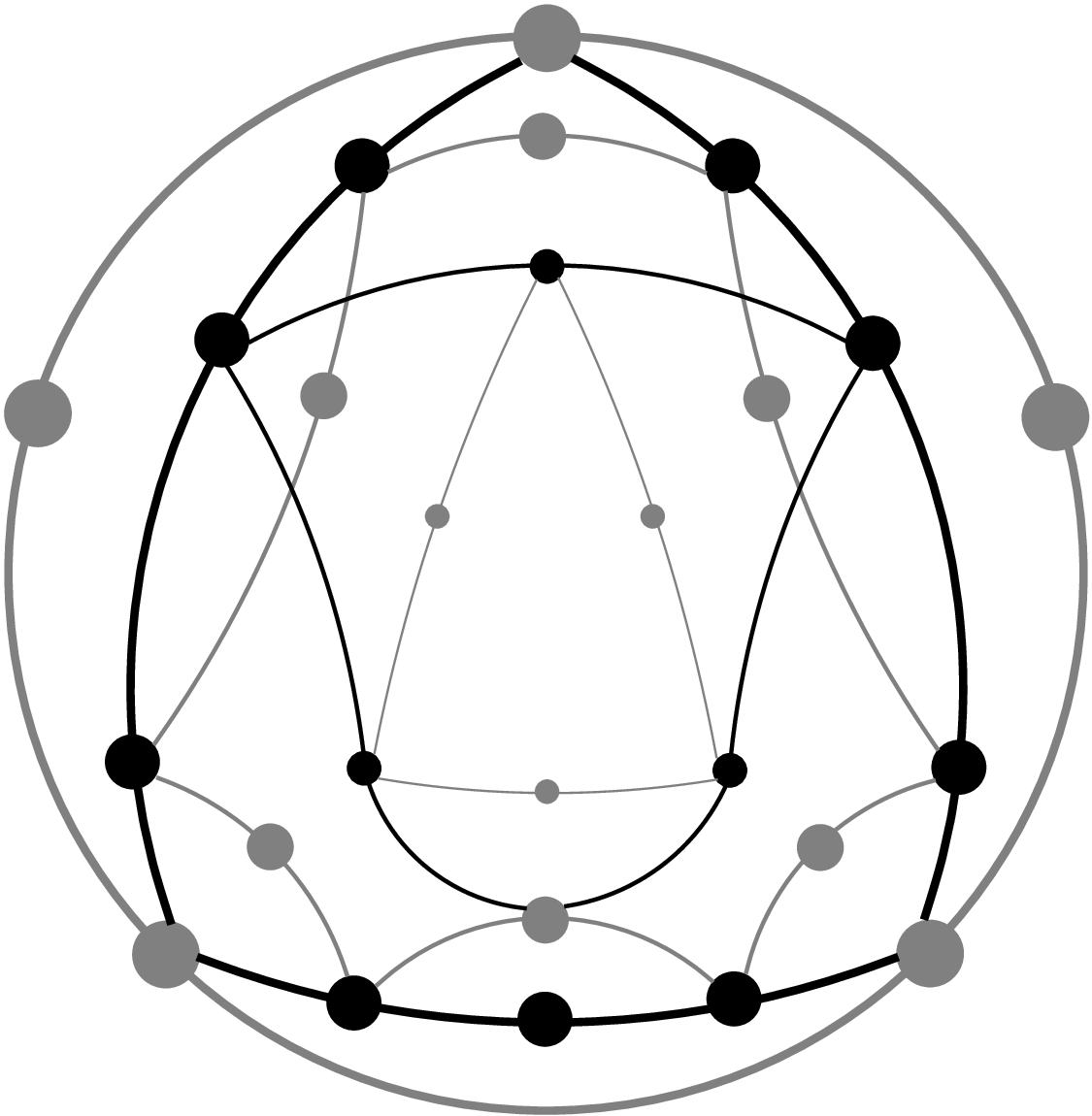}
\put(-146,65){{\Large $C_1$}}
\put(-128,70){{\large$C$}}
\put(-114,65){{$C_2$}}
\put(-94,60){{\footnotesize $C_3$}}
%\put(-5,65){$C_1$}
%\put(-22,70){$C$}
%\put(-37,65){$C_2$}
\caption{A local structure for seeking a peripheral cycle.}
\end{figure}

Case A. $G-V(C_1)$ has a newly created component $K$.

Since $C_1$ is a peripheral cycle of $G-E(C)$, $G-E(C)$ has a component $K^\prime$ such that $K^\prime\subseteq K$. Since $G$ is Eulerian and contains no generalized bouquet $\mathcal{B}_n$, $K^\prime$ is also Eulerian and does not contain any generalized bouquet $\mathcal{B}_n$. Again by induction hypothesis, $K^\prime$ has a peripheral cycle $C_2$. If $C_2$ is a separating cycle of $G$, then $K^\prime-E(C_2)$ has an Eulerian component $K^{\prime\prime}$  such that $K^{\prime\prime}\subseteq K^\prime$, and then we find a peripheral cycle $C_3$ for $K^{\prime\prime}$, see Figure~5 for an example. Since $G$ is finite, this process cannot be repeated infinitely, and therefore we obtain a non-separating cycle $C^\prime$ of $G$. If $C^\prime$ is not induced in $G$, then the union of the part of $C^\prime$ only including both two ends of a chord and the chord forms a peripheral cycle of $G$.

Case B. $C_1$ is non-separating and not induced in $G$.

We easily obtain a peripheral cycle of $G$ from $C_1$ by using a same method that of finding an induced cycle from $C^\prime$.
\end{proof}

By Proposition~\ref{MP-4}, we can take an admissible demotion operation for any non-empty Eulerian graph without any generalized bouquet $\mathcal{B}_n$ with $n\geq3$. Since the maximum degree $\Delta(\mathcal{B}_n)$ of $\mathcal{B}_n$ is $2n$, 4-regular graphs cannot contain any generalized bouquet $\mathcal{B}_n$ with $n\geq3$. Recall that contractions and admissible demotions preserve planarity. Thus we have the following concise recursive characterization of 4-regular planar graphs.

\begin{theorem}\label{MT-5}
Let $G$ be a 4-regular planar graph on $n$ vertices.
Then there is a 4-regular planar graph on $n-1$ vertices obtained from $G$ by an admissible demotion and contraction of an edge incident to the newly created degree 2 vertex.
\end{theorem}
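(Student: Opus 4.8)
The plan is to realize the desired graph on $n-1$ vertices as an explicit Eulerian-minor* of $G$ — one admissible demotion followed by one edge contraction — and to verify $4$-regularity by tracking degrees through both steps. First I would note that $G$, being $4$-regular, has no odd-degree vertex and is therefore Eulerian, and that we may assume $n\ge 2$ (the inductive step). Since the central vertex of a generalized bouquet $\mathcal{B}_n$ has degree $2n\ge 6$ for $n\ge 3$, a $4$-regular graph contains no $\mathcal{B}_n$ with $n\ge 3$ as a subgraph; Proposition~\ref{MP-4} then hands us a peripheral cycle $C$ of $G$, which is precisely what is needed to certify a demotion as admissible.

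Next I would fix a vertex $v$ of $C$ and let $v_1v$ and $vv_2$ be the two edges of $C$ meeting at $v$. Because $v_1vv_2$ lies on the peripheral cycle $C$, the demotion producing $G':=(G-\{v_1v,vv_2\})\cup\{v_1v_2\}$ is admissible. The degree bookkeeping here is immediate: $v$ loses two half-edges, hence has degree $2$ in $G'$, while $v_1$ and $v_2$ each trade their edge to $v$ for the new edge $v_1v_2$ and so retain degree $4$; every other vertex is untouched. Thus $G'$ is a graph on $n$ vertices with exactly one degree-$2$ vertex $v$, and it is planar by Proposition~\ref{MP-3}.

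Finally I would contract an edge $vw$ of $G'$ incident to the new degree-$2$ vertex, with $w\neq v$. The contracted vertex $z$ then has degree $\deg(v)+\deg(w)-2=2+4-2=4$ (any edge parallel to $vw$ becomes a loop at $z$ and still contributes $2$), while all other degrees are unchanged, so the resulting graph $G''$ is $4$-regular on $n-1$ vertices. Since $G''$ arises from $G$ by an admissible demotion followed by a contraction, it is an Eulerian-minor* of $G$ and hence planar by Proposition~\ref{MP-3}; this is exactly the graph claimed.

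The step I expect to be the main obstacle is guaranteeing that the final contraction genuinely lowers the vertex count, i.e. that the degree-$2$ vertex $v$ is incident to a non-loop edge. The identity $2+4-2=4$ makes $4$-regularity automatic once such an edge exists, so the only thing that can go wrong is that the demotion strands $v$ on a loop — and this happens precisely when $v$ carries a loop in $G$ while the witness cycle $C$ passes through $v$ along its two non-loop edges. I would rule this out by choosing the demotion more carefully: in that situation the loop at $v$ is itself a peripheral cycle, and demoting along the loop instead leaves $v$ incident to its two non-loop edges, one of which can then be contracted. Verifying that such a choice is always available — and, more generally, that when every vertex of a chosen peripheral cycle bears a loop one can still demote so as to leave a contractible non-loop edge — is the one point requiring genuine care; everything else reduces to the degree counting above together with the already-established preservation of planarity.
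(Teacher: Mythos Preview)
Your proposal follows the same route as the paper: since a $4$-regular graph contains no generalized bouquet $\mathcal{B}_m$ with $m\ge 3$, Proposition~\ref{MP-4} supplies a peripheral cycle, one demotes at a vertex of it, and then contracts an edge at the resulting degree-$2$ vertex; planarity comes from Proposition~\ref{MP-3} and $4$-regularity from the count $2+4-2=4$. The paper's argument is in fact just the short paragraph preceding the theorem statement and is less detailed than what you wrote.

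The loop subtlety you isolate is real and is not discussed in the paper. Your proposed fix --- when the chosen vertex $v$ carries a loop, demote along that loop instead --- is correct, and the verification you label as ``requiring genuine care'' is short: in a connected $4$-regular graph on $n\ge 2$ vertices, a vertex $v$ carrying a loop has exactly two non-loop half-edges, and since an Eulerian graph has no bridge neither of the corresponding edges is a cut edge; hence $G-v$ remains connected and the loop at $v$ is itself an induced non-separating cycle, i.e.\ peripheral. Demoting along it leaves $v$ incident to two non-loop half-edges, and contracting either of the resulting edges finishes the step. With this one-line observation your argument is complete and, if anything, more careful than the paper's own.
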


By Theorem~\ref{MT-5}, any 4-regular planar graph can be reduced to the 4-regular planar graph on one vertex (the bouquet $B_2$). In other words, we construct any connected 4-regular planar graph from the bouquet $B_2$ and some free-loops (loops without vertices) by the inverse operations of admissible demotion and contraction.

\begin{corollary}\label{MC-5}
Let $B_2$ be the bouquet consisted of one vertex and two loops. Then any connected 4-regular planar graph can be obtained from $B_2$ and some free-loops by a series of the following operation on the plane:
\begin{enumerate}
  \item Subdividing an edge $e$ on a peripheral cycle $C$;
  \item Subdividing an edge $e^\prime\notin E(C)$ lying on the boundary of the other face/region including $e$;
  \item Merging the two degree 2 vertices into a degree 4 vertex.
\end{enumerate}
\end{corollary}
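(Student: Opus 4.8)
The plan is to prove the corollary by \emph{downward} induction on the number of vertices, reading items (1)--(3) as the step-by-step inverse of the reduction furnished by Theorem~\ref{MT-5}. The skeleton is: starting from a connected $4$-regular planar graph $G$ on $n$ vertices, Theorem~\ref{MT-5} produces a $4$-regular planar graph on $n-1$ vertices; iterating this reduction strictly decreases the vertex count and therefore terminates. Since every intermediate graph is $4$-regular, the terminal object has a single vertex, and the only $4$-regular graph on one vertex is $B_2$, while every other (vertexless) component that may have been spun off is a free-loop. Hence $G$ admits a finite reduction sequence $G=G_0\to G_1\to\cdots\to G_k=B_2\sqcup(\text{free-loops})$, and it suffices to check that reversing a single arrow $G_{i+1}\to G_i$ is exactly one application of the three listed operations.

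I would first spell out the reduction concretely, since the inverse correspondence is read off from it. By Proposition~\ref{MP-4} a nonempty $4$-regular graph, being Eulerian and containing no $\mathcal{B}_n$ with $n\ge 3$ (as $\Delta(\mathcal{B}_n)=2n>4$ there), possesses a peripheral cycle $C$; pick a vertex $v$ on $C$ and let $va,vb$ be the two edges of $C$ at $v$, with $vc,vd$ the remaining two. The admissible demotion of $v$ along $C$ deletes $va,vb$ and inserts the edge $ab$ on $C$, leaving $v$ of degree $2$; contracting one of the two surviving edges at $v$ then restores $4$-regularity on $n-1$ vertices. The sole degenerate possibility is that the two surviving half-edges of $v$ form a loop: then the edge contracted at the degree-$2$ vertex is this loop, the vertex is removed, and the loop is retained as a \emph{free-loop}. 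This is precisely where the free-loops in the statement come from, and it explains why the base object is $B_2$ \emph{together with} some free-loops.

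Next I would exhibit the inverse of one reduction arrow as operations (1)--(3). Undoing the demotion means re-subdividing the edge $e=ab$, which sits on the peripheral cycle $C$ of $G_{i+1}$; this is operation (1) and it reinstates a degree-$2$ vertex $p$ adjacent to $a$ and $b$. Undoing the contraction must restore the other two edges $vc,vd$ of $v$: one subdivides the edge $e'$ into which $vc$ (equivalently $vd$) was absorbed, creating a second degree-$2$ vertex $q$, and then merges $p$ and $q$ into the degree-$4$ vertex $v$ --- these are operations (2) and (3). The content to be verified is the \emph{location} clause of item (2): that $e'$ is not on $C$ and lies on the boundary of the face bordering $e$ on the side opposite the empty face cut off by $C$. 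I would deduce this from the rotation at $v$: by the argument in the proof of Proposition~\ref{MP-3}, admissibility forces $va$ and $vb$ to be consecutive in the cyclic order of edges at $v$, so the empty face $f_0$ of $C$ occupies the corner between them and the remaining edges $vc,vd$ lie on the opposite side. After the demotion the degree-$2$ vertex $v$ sits on the boundary of the face $f_1$ on the non-$f_0$ side of the new edge $ab$, whence both $vc$ and $vd$ border $f_1$; contraction does not move them out of $f_1$, so the edge $e'$ that must be subdivided indeed bounds the required face.

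The main obstacle is exactly this planar bookkeeping together with the degenerate configurations, and I would treat them last. One must confirm the face condition in all cases, including $a=b$ (where $e=ab$ becomes a loop), $c=d$ (where $vc,vd$ are parallel), and the loop/free-loop case of the second paragraph --- in the latter, operation (1) is applied to the free-loop regarded as its own peripheral cycle and operation (2) subdivides it inside the face $f_1$, so that the merge of operation (3) reattaches the circle to the main component as a loop at the new degree-$4$ vertex. Finally, because the terminal object always has exactly one vertex and all intermediate graphs are $4$-regular, the vertex-carrying part is forced to be $B_2$ and every extra component is a free-loop; reversing the sequence from $B_2$ and these free-loops rebuilds $G$, and since we are merely retracing $G$'s own reduction the reconstructed graph is the given connected $G$. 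Once the face condition and the degeneracies are nailed down, the induction closes and the corollary follows.
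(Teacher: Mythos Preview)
Your proposal is correct and follows exactly the route the paper takes: the paper derives Corollary~\ref{MC-5} as an immediate consequence of Theorem~\ref{MT-5} by reading the reduction $G\mapsto G'$ backwards, with the sentence ``In other words, we construct any connected 4-regular planar graph from the bouquet $B_2$ and some free-loops by the inverse operations of admissible demotion and contraction'' serving as the entire argument. You have simply supplied the details the paper leaves implicit---the identification of the terminal object as $B_2\sqcup(\text{free-loops})$, the face/rotation bookkeeping showing that the inverse of one reduction step is precisely operations (1)--(3), and the handling of the loop/free-loop degeneracy---so your write-up is more thorough than the paper's, but the approach is the same.
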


\begin{myexample}\label{ME-6}
Constructing the Octahedron from the bouquet $B_2$ is illustrated from left to right in Figure~6. We read Figure~6 in reverse order for reducing the Octahedron to $B_2$. In Figure~6, for giving some convenience to a reader, the edges of witness cycles, demoted vertices and new vertices created by subdivisions are colored with gray, and intermediate processes are also depicted in small pictures over (resp. under) arrow directing from left to right (resp. right to left).
\end{myexample}
\begin{figure}[htbp]
\centering
\includegraphics[width=12.0cm]{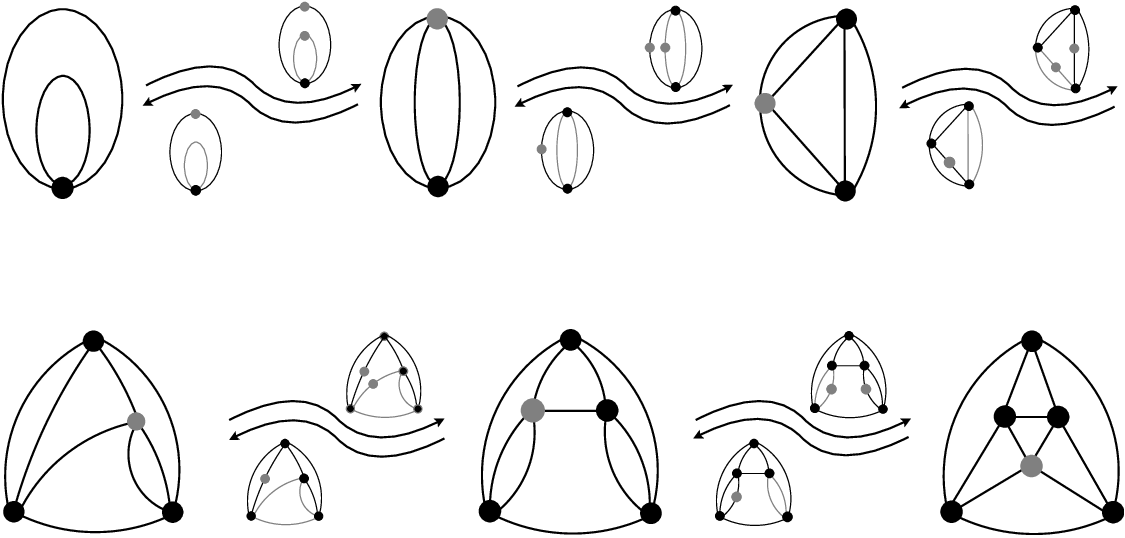}
\caption{A recursive characterization of
the Octahedron from the bouquet $B_2$.}
\end{figure}
Since simple graphs are a proper subset of graphs, simple 4-regular planar graphs are also recursively characterized from the Octahedron. However, we continuously take the operation defined in Corollary~\ref{MC-5} for getting from one simple 4-regular planar graph to another, see the following for an example.
\begin{myexample}\label{ME-7}
Reading Figure~7 from left to right is a constructing the simple 4-regular plane on lower right from the Octahedron, and looking in reverse order is reducing the plane graph to the Octahedron. The function of the small pictures and gray color are identical to Example~\ref{ME-6}.
\end{myexample}

\begin{figure}[htbp]
\centering
\includegraphics[width=16.0cm]{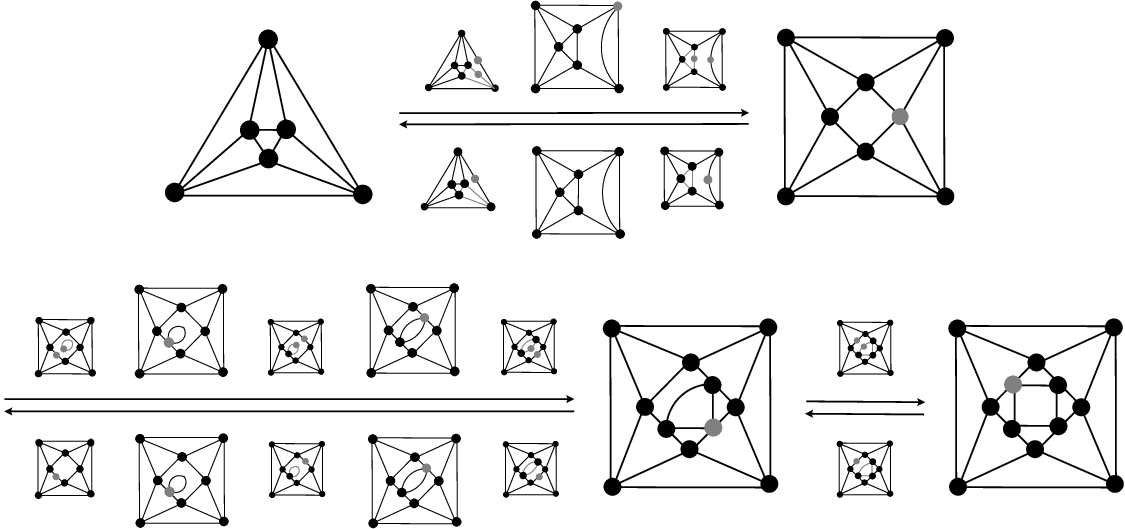}
\caption{An example of recursive characterization of simple 4-regular planar graphs.}
\end{figure}

\vskip0.2cm

\noindent{\bf Acknowledgments} This work is supported by National Natural Science Foundation of China,  Grant/Award Numbers: 11961070, 12101600.

\newpage


\begin{thebibliography}{10}
\bibitem{Bondy} J. Bondy and U. Murty, {\it Graph theory}, Springer, Berlin, 2008.
\bibitem{Broersma} H. J. Broersma, A. J. W. Duijvestijn, and F. G\"{o}bel, {\it Generating all 3-connected 4-regular planar graphs from the octahedron graph}, J. Graph Theory {\bf 17} (1993), 613-620.
\bibitem{Chartrand} G. Chartrand and F. Harary, {\it Planar permutation graphs}, Ann. Inst. Henri Poincar\'{e} Sec B {\bf 3} (1967), 433-438.
\bibitem{Chudnovsky} M. Chudnovsky, G. Kalai, E. Nevo, I. Novik, and P. Seymour, {\it Bipartite minors}, J. Comb. Theory Ser. B {\bf 116} (2016), 219-228.
\bibitem{Lehel} J. Lehel, {\it Generating all 4-regular planar graphs from the graph of the Octahedron}, J. Graph Theory {\bf 5} (1981), 423-426.
\bibitem{Manca} P. Manca, {\it Generating all planar graphs regular of degree four}, J. Graph Theory {\bf 3} (1979), 357-364.
\bibitem{MM} M. Metsidik and X. Jin, {\it Eulerian and even-face ribbon graph minors},  Discrete Math {\bf 343} (2020) 111953.
\bibitem{Moffatt6} I. Moffatt, {\it Excluded minors and the ribbon graphs of knots}, J. Graph Theory {\bf 81} (2016), 329-341.
\bibitem{Moffatt7} I. Moffatt, {\it Ribbon graph minors and low-genus partial duals} Ann. Comb. {\bf 20} (2016), 373-378.
\bibitem{R-S} N. Robertson and P. Seymour, {\it Graph minors. XX. Wagner's Conjecture}, J. Combin. Theory Ser. B {\bf 92} (2004), 325-357.
\bibitem{Thomassen} C. Thomassen, {\it Kuratowski's theorem}, J. Graph Theory {\bf 5} (1981), 225-241.
\bibitem{O.Ve} O. Veblem, {\it An application of modular equations in analysis situs}, Ann. Math. {\bf 14} (1912), 86-94.
\bibitem{Wagner} K. Wagner, {\it\"{U}ber eine Eigenschaft der ebenen Komplexe}, Math. Ann. {\bf 114} (1) (1937), 570-590.
\bibitem{Wagner1} D.K. Wagner, {\it Bipartite and Eulerian minors}, Eur. J. Combin. {\bf 74} (2018), 1-10.
\end{thebibliography}
\end{document}